\newtheorem{thm}{Theorem}[section]
\newtheorem{lem}[thm]{Lemma}
\newtheorem{prop}[thm]{Proposition}
\newtheorem{defi}[thm]{Definition}
\newtheorem{corl}[thm]{Corollary}
\newtheorem{xrem}{Remark}
\begin{document}
\baselineskip=17pt

\subjclass[2010]{Primary 14J26, 14J60; Secondary 14H60, 14J10}
\keywords{Semistable; Higgs bundle; Moduli spaces}
\author{Snehajit Misra}

\address{The Institute of Mathematical Sciences, HBNI, CIT Campus, Taramani, Chennai 600113, India.}
\email[Snehajit Misra]{snehajitm@imsc.res.in}

\begin{abstract}
Let $\pi : X = \mathbb{P}_C(E) \longrightarrow C$ be a ruled surface over an algebraically closed field $k$ of characteristic 0, with a fixed polarization $L$ on $X$. In this paper, we show that pullback of a (semi)stable Higgs bundle on $C$ under $\pi$ is a $L$-(semi)stable Higgs bundle. Conversely, if $(V,\theta)$ is a $L$-(semi)stable Higgs bundle on $X$ with $c_1(V)= \pi^*(\bf d \rm)$ for some divisor \bf d \rm  of degree $d$ on $C$ and $c_2(V)=0$, then there exists a (semi)stable Higgs bundle $(W,\psi)$ of degree $d$ on $C$ whose pullback under $\pi$ is isomorphic to $(V,\theta)$. As a consequence, we get an isomorphism between the corresponding moduli spaces of (semi)stable Higgs bundles. We also show the existence of non-trivial stable Higgs bundle on $X$ whenever $g(C)\geq 2$ and the base field is $\mathbb{C}$.
\end{abstract}

\title{ Stable Higgs Bundles on ruled surfaces}
\maketitle

\vskip 4mm

\section{Introduction}
A Higgs bundle on an algebraic variety $X$ is a pair ($V,\theta$) consisting of a vector bundle $V$ over $X$ together with a Higgs field $\theta : V \longrightarrow V \otimes \Omega^1_X$ such that 
$\theta \wedge \theta = 0$. Higgs bundle comes with a natural stability condition (see Definition \ref{defn1} for stability), which allows one to study the moduli spaces of stable Higgs bundles on $X$. Higgs bundles on Riemann surfaces 
were first introduced by Nigel Hitchin in 1987 and  subsequently, Simpson extended this notion on higher dimensional varieties. Since then, these objects have been studied by many authors, but very little is known about stability of Higgs bundles 
on ruled surfaces.

  Let $\pi : X = \mathbb{P}_C(E) \longrightarrow C$ be a ruled surface over an algebraically closed field $k$ of characteristic 0, where $C$ is a smooth irreducible projective curve of genus $g(C) \geq 0$. We fix a polarization $L$ on $X$.
  In this paper, our main results are the following, 
  
\subsection*{\bf Theorem \bf\ref{thm3.3}}
\it Let  $\pi:X \longrightarrow C $  be a ruled surface with a fixed polarization $L$ on $X$. Let $\mathcal{E} = (V,\theta)$ be a semistable Higgs bundle of rank $r$ on $C$. Then, the pullback $\pi^*(\mathcal{E})=(\pi^*(V),d\pi(\theta))$ 
is $L$-semistable Higgs bundle on $X$.

\subsection*{\bf Theorem \bf\ref{thm3.5}}
\it Let $L$ be a fixed polarization on a ruled surface $\pi: X\longrightarrow C $. Let $\mathcal{E}=(V,\theta)$ be a $L$-semistable Higgs bundle of rank $r$ on $X$ with $c_1(V)=\pi^*(\bf d\it)$, for some divisor \bf d \it of degree $d$ on $C$,
then $c_2(V) \geq 0$ and $c_2(V)= 0$ iff there exists a semistable Higgs bundle $\mathcal{W}=(W,\psi)$ on $C$ such that $\pi^*(\mathcal{W})=(\pi^*(W),d\pi(\psi)) \cong \mathcal{E}$ on $X$.
 
\subsection*{\bf Theorem \bf\ref{thm4.1}} 
  Let $L$ be a fixed polarization on a ruled surface $\pi: X\longrightarrow C $. Then, for any stable Higgs bundle $\mathcal{W} = (W,\psi)$ on $C$, the pullback Higgs bundle $\pi^*(\mathcal{W})$ is $L$-stable Higgs bundle on $X$. 
Conversely, if $\mathcal{V} = (V,\theta)$ is a $L$-stable Higgs bundle on $X$ with $c_1(V) = \pi^*(\bf d \it)$ for some divisor \bf d \it of degree $d$ on $C$ and $c_2(V)= 0$, then $\mathcal{V} \cong \pi^*(\mathcal{W})$ for some stable Higgs
bundle $\mathcal{W}=(W,\psi)$ on $C$.\rm

\vskip 1.8mm
Note that, in particular, taking the Higgs field $\theta=0,\psi=0$ in Theorem \ref{thm4.1}, we recover a well known result for ordinary vector bundles of rank 2 on ruled surfaces which has been proved by Fumio Takemoto (See in \cite{12} Proposition 3.4 
and Proposition 3.6) as well as by Marian Aprodu, Vasile Br\^{i}nz\v{a}nescu independently (see in \cite{1} corollary 3 ). Although, our approach in this paper is different from that of \cite{12} and \cite{1}.

 According to Simpson ( See \cite{9}, \cite{10}), the moduli space of $S$-equivalence classes of semistable rank $n$ Higgs bundles with vanishing Chern classes on any complex projective variety $X$ 
 can be identified with the space of isomorphism classes of representations of $\pi_1(X,*)$ in $GL(n,\mathbb{C})$. For a ruled surface, $ \pi : X = \mathbb{P}_C(E) \longrightarrow C$, there is an isomorphism of fundamental groups,
 $\pi_1(X,*) \cong \pi_1(C,*)$. Hence, for a ruled surface $X$ over a curve $C$, ( when the base field is $\mathbb{C}$), we have a natural algebraic isomorphism of the corresponding moduli of semistable 
 Higgs bundles on $X$ and $C$ respectively with vanishing Chern classes. In this paper, we prove a similar algebraic isomorphism between the corresponding moduli spaces of Higgs bundles when the Chern classes are not necessarily vanishing and 
 the base field is any algebraically closed field $k$ of characteristic 0. More precisely, we prove that,

\subsection*{\bf Theorem \bf\ref{thm6}}
  \it The moduli spaces $\mathcal{M}_X^{Higgs}(r,\pi^*(\bf d \rm),0)$ and $\mathcal{M}^{Higgs}_C(r,d)$ are isomorphic as algebraic varieties, where  $\mathcal{M}^{Higgs}_C(r,d)$ denotes the moduli space of S-equivalence classes of semistable Higgs bundles of rank $r$ and degree $d$ on $C$ and $\mathcal{M}_X^{Higgs}(r,\pi^*(\bf d \rm),0)$ denotes the moduli space of S-equivalence 
 classes of $L$-semistable Higgs bundles of rank $r$ on $X$, having vanishing second chern class and first chern class of the form $\pi^*(\bf d \rm)$ for some divisor \bf d \it of degree $d$ on $C$.\rm
 
 \vspace{4mm}

In \cite{13} , similar kind of questions are discussed for a relatively minimal non-isotrivial elliptic surfaces $\pi : X \longrightarrow C$ over the field of complex numbers, whenever $g(C) \geq 2$.
\section{Preliminaries}
 All the algebraic varieties are assumed to be irreducible and defined over an algebraically closed field $k$ of characteristic 0 unless otherwise specified. In this section, we recall the definition and basic properties of Higgs bundle. 
 We refer the reader to \cite{9} and \cite{5} for more details.
 
 \subsection{Definitions and Conventions}
 Let $X$ be a smooth projective variety of dimension $s$. By a polarization on $X$, we mean, a ray $\mathbb{R}_{>0}\cdot L$ where $L$ is in the ample cone inside the real  N\'{e}ron-Severi group Num$(X)_{\mathbb{R}}$.  
 If $F$ is a coherent sheaf on $X$, then rank of $F$ is defined as the rank of the $\mathcal{O}_\xi$ -vector space $F_\xi$, where $\xi$ is the unique generic point of $X$. Note that, $F$ is a torsion sheaf iff rank of $F$ is 0.
 Let $L$ be a fixed polarization on $X$ and $F$ be a torsion-free coherent sheaf of rank $r$ on $X$. The slope of $F$ with respect to $L$ is defined by
\begin{align*}
\mu_L (F) :=\frac{c_1(F).L^{s - 1}}{r}.
\end{align*} 
\begin{defi}
\rm A torsion free sheaf $F$ is said to be slope $L$-semistable $($resp. slope $L$-stable$)$ if for any coherent subsheaf $G$ of $F$ with 0 $<$ rank$(G) < $rank$(F)$, one has $\mu_L(G)\leq \mu_L(F) ($resp. $\mu_L(G) < \mu_L(F))$.
\end{defi}
\begin{defi}
\rm A Higgs sheaf $\mathcal{E}$ on $X$ is a pair $(E,\theta$), where $E$ is a coherent sheaf on $X$ and $\theta: E \longrightarrow E \otimes \Omega^1_X$ is a morphism of $\mathcal{O}_X$-module such that $\theta \wedge \theta$ = 0, 
where $\Omega^1_X$ is the cotangent sheaf to $X$ and $\theta \wedge \theta$ is the composition map
\begin{align*}
E \longrightarrow E\otimes \Omega^1_X \longrightarrow E \otimes \Omega^1_X \otimes \Omega^1_X \longrightarrow E \otimes \Omega^2_X.
\end{align*}
\end{defi}
   $\theta$ is called the Higgs-field of $\mathcal{E}$. A Higgs bundle is a Higgs sheaf $\mathcal{V}=(V,\theta)$ such that $V$ is a locally-free $\mathcal{O}_X$-module. If $\mathcal{E}=(E,\phi)$ and $\mathcal{G}=(G,\psi)$ are Higgs sheaves, 
   a morphism $f:(E,\phi)\longrightarrow(G,\psi)$ is a morphism of $\mathcal{O}_X$-modules $f:E\longrightarrow G$ such that the following diagram commutes.
\begin{center}
 \begin{tikzcd}
E \arrow[r, "f"] \arrow[d, "\phi"]
& G \arrow[d, "\psi" ] \\
E \otimes\Omega^1_X \arrow[r, "f\otimes id" ]
& G\otimes\Omega^1_X
\end{tikzcd}
\end{center}

$\mathcal{E}$ and $\mathcal{G}$ are said to isomorphic if there is a morphism $f : \mathcal{E} \longrightarrow \mathcal{G}$ such that $f$ as an $\mathcal{O}_X$-module map is an isomorphism.

\begin{defi}\label{defn1}
 \rm A  Higgs sheaf  $\mathcal{E} = (E,\theta)$ is said to be $L$-semistable $($resp. $L$-stable$)$ if $E$ is torsion-free and for every $\theta$-invariant  subsheaf $G$ of $E$ $($i.e. $\theta(G) \subset G\otimes \Omega^1_X)$ with  
 $0 < $ rank$(G)<$ rank$(E)$, one has $\mu_L(G)\leq \mu_L(E)$ $($resp. $\mu_L(G) < \mu_L(E))$.
\end{defi}  
\begin{xrem}\label{remark1}
 \rm Moreover, when $X$ is a smooth projective curve or a surface, in the definition of semistability(resp. stability) for a Higgs bundle $\mathcal{V}=(V,\theta)$, it is enough to consider $\theta$-invariant subbundles $G$ of $V$ with $0 < $ 
 rank$(G)<$ rank$(V)$ for which the quotient $V/G$ is torsion-free. It is clear from the definition that, for a $L$-semistable (resp. stable) Higgs bundle $\mathcal{E} = (V,\theta)$ with zero Higgs field (i.e. $\theta$=0), the underlying vector 
 bundle $V$ itself slope $L$-semistable (resp. stable). Also, a slope $L$-semistable(resp. stable) vector bundle on $X$ is Higgs semistable(resp. stable) with respect to any Higgs field $\theta$ defined on it. 
 If $X$ is a smooth projective curve, then for a torsion-free sheaf $F$ of rank $r$, $\mu_L(F)$ is independent of the choice of the polarization $L$. Hence, whenever (semi)stability of bundles will be talked on a curve, 
 the polarization will not be mentioned.
\end{xrem} 
 \subsection*{}
 For a smooth map $\phi : X \longrightarrow Y $ between two smooth projective varieties $X$ and $Y$, and a Higgs bundle $\mathcal{E}=(V,\theta)$ on $Y$, its pullback $\phi^*(\mathcal{E})$ under $\phi$ is defined as the Higgs bundle
 ($\phi^*(V),d\phi(\theta))$, where $d\phi(\theta)$ is the composition map
 \begin{align*}
  \phi^*(V)\longrightarrow \phi^*(V) \otimes \phi^*(\Omega^1_Y) \longrightarrow \phi^*(V)\otimes \Omega^1_X
 \end{align*}
If $\phi : X \longrightarrow Y $ is a finite separable morphism of smooth projective curves, then a Higgs bundle $\mathcal{E}$ is semistable on Y  iff $\phi^*(\mathcal{E})$ is semistable on $X$. See Lemma 3.3 in \cite{2} for the proof.
 \subsection*{}
 For a $L$-semistable Higgs bundle $(V,\theta)$ on $X$, there is a filtration of $\theta$-invariant subbundle 
 \begin{align*}
  0 \subset V_0 \subset V_1 \subset V_2 \subset ....\subset V_{k-1} \subset V_k = V
 \end{align*}
called the Jordan-H\"{o}lder filtration, where for every $i=0,1,...,k$, $\mu_L(V_i/V_{i-1}) = \mu_L(V)$ and the induced Higgs sheaf $(V_i/V_{i-1},\overline{\theta}\vert_{V_{i-1}})$ is $L$-stable. This filtration is not unique, but the graded sheaf
$ \mathcal{G}r(V,\theta) = \bigoplus^k_{i=0} (V_i/V_{i-1},\overline{\theta}\vert_{V_{i-1}})$ is unique upto isomorphism. Two $L$-semistable Higgs bundle are said to be S-equivalent if their corresponding graded sheaves are isomorphic. (See \cite{9})

\subsection{Bogomolov's Inequality}    
 Let $F$ be a coherent sheaf on $X$ with Chern classes $c_i$ and rank $r$. The discriminant of $F$ by definition is the characteristic class 
\begin{align*}
 \triangle(F) = 2rc_2 - (r-1)c_1^2
\end{align*}
   Let $X$ be a smooth projective surface and $\mathcal{E} = (V,\theta)$ be a semistable Higgs bundle with respect to a fixed polarization $L$ on $X$. Then, Bogomolov Inequality says that $\triangle(V) \geq 0$ ( See Proposition 3.4 in \cite{8}). 
   Recently, it has been proved that the Bogomolov's inequality also holds true in characteristic 0 for semistable Higgs sheaf (See Theorem 7 in \cite{7}).

 \subsection{Ruled Surfaces}
 Let $ C $  be a smooth projective algebraic curve of genus \it g\rm over an algebraically closed field $k$ with char$(k)$ = 0.   A \it geometrically ruled surface \rm  or simply \it ruled surface\rm, is a surface $X$, together with a surjective morphism
 $\pi: X \longrightarrow C$ such that the fiber $X_y$ is isomorphic to $\mathbb{P}^1_{k}$ for every closed point $y \in C$, and such that $\pi$ admits a section(i.e. a morphism $ \sigma : C \longrightarrow X$ such that $\pi \circ \sigma$ = id$_C$). 
 Equivalently, $X \cong \mathbb{P}_C(E)$ over $C$ for some rank 2 vector bundle $E$ on $C$. Moreover, if $E_1$ and $E_2$ are two vector bundle of rank 2 on $C$, then $ \mathbb{P}_C(E_1) $ and $\mathbb{P}_C(E_2)$ are isomorphic as ruled surfaces over 
 $C$ iff there is a line bundle $N$ on $C$ such that $ E_1 \cong E_2 \otimes N $. See \cite{5} (chapter V, section 2) for more details.
 
            Let $\sigma$ be a section and $f$ be a fiber of the ruling $\pi: X \longrightarrow C$. Then, 
            
            1) Pic$X\simeq \mathbb{Z}\cdot\sigma \oplus \pi^*($Pic$C$) and 
            
            2) Num$X\simeq \mathbb{Z}\cdot\sigma \oplus \mathbb{Z}\cdot f$ satisfying $\sigma\cdot f=1 , f^2=0$

 \section{Semistability under pullback}
 The next lemma is due to  \cite{4} or \cite{11} which we will use repeatedly to prove our main results.

\begin{lem}\label{lemma3.1}
\rm Let $F$ be a torsion free sheaf of rank $r$ on a ruled surface $\pi : X \longrightarrow C $  and $F\vert_f \cong \mathcal{O}_f^{\oplus r}$ for a generic fiber $f$ of the map $\pi$. Then, $c_2(F)\geq 0$ and $c_2(F)= 0$ iff $F \cong \pi^*(V)$ for 
some vector bundle $V$ on $C$.
\begin{proof}
See Lemma 2.2 in \cite{11}.
\end{proof}
\end{lem}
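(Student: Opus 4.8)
The plan is to write $F$ as an elementary modification of a pullback and then read off $c_2(F)$ from Riemann--Roch on $X$. Put $W:=\pi_*F$. Since $\pi$ is proper and flat with $1$-dimensional fibres and $F$ is torsion free, $W$ is torsion free -- hence locally free -- on the smooth curve $C$, of rank $h^0(f,F\vert_f)=h^0(\mathbb{P}^1,\mathcal{O}^{\oplus r})=r$. Let $\phi\colon \pi^*W\to F$ be the adjunction morphism. Over the generic fibre $F$ restricts to $\mathcal{O}^{\oplus r}$ by hypothesis, $\pi^*W$ does too by base change, and there $\phi$ is the evaluation map of the trivial (globally generated) bundle, hence an isomorphism. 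As $\pi^*W$ is torsion free, $\phi$ is therefore injective, and its cokernel $T$ is a torsion sheaf whose support, being a closed subset of $X$ not meeting the generic fibre, is contained in finitely many fibres $f_{y_1},\dots,f_{y_k}$. Thus
\begin{align*}
0\longrightarrow \pi^*W\stackrel{\phi}{\longrightarrow} F\longrightarrow T\longrightarrow 0 .
\end{align*}

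Next I would compute Chern classes. Writing $c_1(T)=\sum_i n_i[f_{y_i}]$ with $n_i\ge 0$, both $c_1(T)$ and $c_1(\pi^*W)$ are numerically multiples of $f$, so (using $f^2=0$) one gets $c_1(F)^2=0$, $c_1(\pi^*W)\cdot c_1(T)=0$, and $c_2(\pi^*W)=0$; the Whitney formula applied to the sequence then gives $c_2(F)=c_2(T)$. Pushing the sequence forward and using $\pi_*\pi^*W=W$, $R^1\pi_*\pi^*W=0$, and that $\pi_*\phi$ is a surjective endomorphism of the coherent sheaf $W$ (it is split by the unit of the adjunction), hence an isomorphism, we obtain $\pi_*T=0$ and $R^1\pi_*F\cong R^1\pi_*T$; the latter is a torsion sheaf on $C$, say of length $\ell\ge 0$, so $\chi(X,T)=\chi(C,R\pi_*T)=-\ell$ by Leray. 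On the other hand, Riemann--Roch on the surface $X$, together with $c_1(T)^2=0$, $\mathrm{ch}_2(T)=-c_2(T)$, and $f\cdot K_X=-2$, gives $\chi(X,T)=-c_2(T)+\sum_i n_i$. Comparing the two expressions,
\begin{align*}
c_2(F)=c_2(T)=\sum_i n_i+\ell\ \ge\ 0 ,
\end{align*}
and $c_2(F)=0$ forces $\sum_i n_i=0$ and $\ell=0$; then $T$ has $0$-dimensional support, but $\pi_*T=0$ forces $T=0$, so $\phi$ is an isomorphism and $F\cong\pi^*W$. The reverse implication is immediate: $\pi^*V$ is locally free with $c_2(\pi^*V)=\pi^*c_2(V)=0$, since $c_2(V)=0$ on the curve $C$.

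I expect the genuine obstacle to be the inequality $c_2(T)\ge 0$. For an \emph{arbitrary} torsion sheaf supported on fibres this fails -- for instance the push-forward to $X$ of $\mathcal{O}_{\mathbb{P}^1}(1)$ along a fibre has $c_2=-1$ -- so one must genuinely exploit that $T$ is the cokernel of the adjunction map, which is exactly what the identities $\pi_*T=0$ and $R^1\pi_*F\cong R^1\pi_*T$ encode. Two further elementary facts make the Riemann--Roch computation collapse to the clean form above: $f^2=0$, which kills all the ``horizontal'' contributions, and $\omega_{X/C}^2=0$ on a ruled surface, which is what one would invoke if one preferred to run the whole argument through Grothendieck--Riemann--Roch for $\pi$ instead. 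The remaining ingredients -- properness and flatness of $\pi$, cohomology and base change on the generic fibre, the adjunction triangle identities, and Nakayama's lemma -- are routine.
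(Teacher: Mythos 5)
Your argument is correct and complete. Note that the paper itself gives no proof here --- it simply cites Lemma 2.2 of Sun's paper \cite{11} --- so you have in effect supplied the missing argument, and you do so along the standard lines: the evaluation map $\phi\colon\pi^*\pi_*F\to F$ is an isomorphism on the fibre over the generic point of $C$, hence injective with cokernel $T$ supported on finitely many fibres, and the combination of Whitney's formula ($c_2(F)=c_2(T)$, using $f^2=0$), Riemann--Roch for $T$, and the Leray computation $\pi_*T=0$, $\chi(T)=-\ell$ yields $c_2(F)=\sum_i n_i+\ell\ge 0$ with equality forcing $T=0$. Your closing remark is also the right diagnosis: the inequality is false for an arbitrary vertical torsion sheaf, and the identities $\pi_*T=0$ and $R^1\pi_*F\cong R^1\pi_*T$ coming from the adjunction triangle (together with $R^1\pi_*\mathcal{O}_X=0$ for a ruled surface) are exactly what rescues it. The only phrase worth tightening is ``not meeting the generic fibre'': what you use is that $\mathrm{Supp}(T)$ is a closed set disjoint from the fibre over the generic point of $C$, so its (closed) image in $C$ is finite --- a horizontal curve would meet every closed fibre while still being excluded by this argument.
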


\begin{prop}\label{prop3.2}
\rm If $\mathcal{E} = (V,\theta)$ is $L_1$-semitable Higgs bundle on a smooth algebraic surface $X$ with a fixed polarization $L_1$ on $X$ and $\triangle(V)=0 $, then the semistability of the Higgs bundle $\mathcal{E}=(V,\theta)$ is independent 
of the polarization chosen.
\end{prop}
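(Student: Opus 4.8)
The plan is to prove the equivalent assertion that, for a Higgs sheaf $\mathcal{E}=(V,\theta)$ with $V$ torsion-free, $L_1$-semistability together with $\triangle(V)=0$ forces $H$-semistability for \emph{every} polarization $H$ on $X$; I argue by induction on $r=\operatorname{rank}(V)$, the case $r=1$ being vacuous (the graded pieces appearing below are only torsion-free, so it is convenient to run the induction for Higgs sheaves with torsion-free underlying sheaf). Fix a polarization $H$, join it to $L_1$ by the segment $L_t=(1-t)L_1+tH$ for $t\in[0,1]$, which stays in the ample cone by convexity, and set $S=\{t\in[0,1]:\mathcal{E}\text{ is }L_t\text{-semistable}\}$. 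By the standard wall-and-chamber structure of the ample cone (boundedness of the relevant $\theta$-invariant destabilizing subsheaves), $S$ is closed in $[0,1]$ and $L_t$-stability is an open condition on $t$; clearly $0\in S$. If $S=[0,1]$ there is nothing to prove, so assume $S\neq[0,1]$ and pick $t^{\ast}\in S$ on the boundary of $[0,1]\setminus S$. Then $\mathcal{E}$ cannot be $L_{t^{\ast}}$-stable, for otherwise it would stay stable, hence semistable, for $t$ in a neighbourhood of $t^{\ast}$, contradicting the choice of $t^{\ast}$. Thus $\mathcal{E}$ is \emph{strictly} $L_{t^{\ast}}$-semistable.

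I then take a Jordan--H\"{o}lder filtration $0=V_0\subsetneq V_1\subsetneq\cdots\subsetneq V_k=V$ of $\mathcal{E}$ with respect to $L_{t^{\ast}}$, with $k\geq 2$, in which the $V_i$ are $\theta$-invariant and each $G_i:=V_i/V_{i-1}$ is $L_{t^{\ast}}$-stable with $\mu_{L_{t^{\ast}}}(G_i)=\mu_{L_{t^{\ast}}}(V)$. Writing $r_i=\operatorname{rank}(V_i)$, $\rho_i=\operatorname{rank}(G_i)$ and $\eta_i=\tfrac{c_1(V_{i-1})}{r_{i-1}}-\tfrac{c_1(V_i)}{r_i}\in\operatorname{Num}(X)_{\mathbb{Q}}$, a direct computation with Chern characters in $0\to V_{i-1}\to V_i\to G_i\to 0$ yields the identity
\begin{align*}
\frac{\triangle(V_i)}{r_i}=\frac{\triangle(V_{i-1})}{r_{i-1}}+\frac{\triangle(G_i)}{\rho_i}-\frac{r_{i-1}r_i}{\rho_i}\,\eta_i^{2}.
\end{align*}
Since all the $V_j$ have the common slope $\mu_{L_{t^{\ast}}}(V)$, we have $\eta_i\cdot L_{t^{\ast}}=0$, hence $\eta_i^{2}\leq 0$ by the Hodge index theorem, while $\triangle(G_i)\geq 0$ by the Bogomolov inequality for semistable Higgs sheaves (Theorem 7 in \cite{7}). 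Therefore $\tfrac{\triangle(V_i)}{r_i}\geq\tfrac{\triangle(V_{i-1})}{r_{i-1}}$ for every $i$, while the two extremes are $\tfrac{\triangle(V_1)}{r_1}\geq 0$ and $\tfrac{\triangle(V)}{r}=0$; so every one of these inequalities is an equality, which forces $\triangle(G_i)=0$ for all $i$ and $\eta_i^{2}=0$ for all $i\geq 2$. Combining $\eta_i\cdot L_{t^{\ast}}=0=\eta_i^{2}$ with the Hodge index theorem gives $\eta_i=0$ in $\operatorname{Num}(X)_{\mathbb{R}}$, hence in $\operatorname{Num}(X)_{\mathbb{Q}}$; consequently the ratios $c_1(V_i)/r_i$ all coincide and $c_1(G_i)/\rho_i=c_1(V)/r$ for every $i$.

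Now each $G_i$ is a semistable Higgs sheaf of rank $\rho_i<r$ with $\triangle(G_i)=0$, so by the inductive hypothesis it is $H'$-semistable for \emph{every} polarization $H'$, and $\mu_{H'}(G_i)=\tfrac{c_1(V)\cdot H'}{r}=\mu_{H'}(V)$ for all such $H'$. Since $\mathcal{E}$ is an iterated extension of the Higgs sheaves $(G_i,\overline{\theta})$ along the $\theta$-invariant subsheaves $V_i$, and an extension of $H'$-semistable Higgs sheaves of one common slope is again $H'$-semistable, it follows that $\mathcal{E}$ is $H'$-semistable for every polarization $H'$ — in particular for $H$, so $1\in S$, contradicting $S\neq[0,1]$. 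This completes the induction, hence the proof. The step I expect to be the main obstacle is the middle paragraph: teasing out of the single numerical input $\triangle(V)=0$ both that the Jordan--H\"{o}lder factors have vanishing discriminant \emph{and} that their normalized first Chern classes coincide — which is precisely where the Bogomolov inequality for Higgs sheaves has to be played off against the negative-definiteness clause of the Hodge index theorem. A second, more routine, point that still needs care is arranging the semicontinuity of $L$-(semi)stability of $\mathcal{E}$ as $L$ ranges over the ample cone.
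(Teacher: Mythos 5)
Your proposal is correct, and it rests on exactly the same three ingredients as the paper's proof --- Bogomolov's inequality for semistable Higgs sheaves, the Hodge index theorem applied to a class orthogonal to a critical polarization, and the additivity formula for the normalized discriminant $\triangle/r$ in a short exact sequence --- but it is organized quite differently. The paper argues directly: assuming a destabilizer $V_0$ for some $L_2$, it forms the explicit polarization $L_0=L_1+r(V_0)L_2$ with $r(V_0)$ minimal over the (Grothendieck-bounded) family of destabilizers, so that $V_0$ and $V/V_0$ are both $L_0$-semistable of equal slope; the class $\xi=r\,c_1(V_0)-r_0\,c_1(V)$ then satisfies $\xi\cdot L_0=0$ and $\xi\cdot L_2>0$, whence $\xi^2<0$ by Hodge index, contradicting $\xi^2\geq 0$ from the two-term discriminant identity. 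You instead locate a critical $t^{\ast}$ on the segment by a closedness/openness argument, take a full Jordan--H\"older filtration there, deduce $\triangle(G_i)=0$ and $\eta_i=0$ for all $i$, and then need two extra pieces the paper does not: an induction on rank and a reassembly lemma (an iterated extension of equal-slope $H'$-semistable Higgs sheaves is $H'$-semistable). What the paper's route buys is brevity --- a two-step filtration and an immediate numerical contradiction, with no induction; what yours buys is that you never have to verify the slightly delicate minimality claim that makes both $V_0$ and $V/V_0$ semistable at $L_0$ (which the paper asserts rather tersely), at the price of needing openness of $L_t$-stability in $t$, which is the wall-and-chamber boundedness statement you rightly flag. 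Two technical points apply equally to both arguments and are worth noting rather than charging to you: the critical polarization ($L_0$, resp.\ $L_{t^{\ast}}$) is a priori only a real ample class, so one must either perturb to a rational class or invoke the Bogomolov inequality and the existence of Jordan--H\"older filtrations for real polarizations; and the boundedness input (Grothendieck's lemma restricted to $\theta$-invariant saturated subsheaves) is used by both proofs at the same level of detail.
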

\begin{proof}
 Suppose there is a polarization $L_2$ such that $\mathcal{E} = (V,\theta)$ is not $L_2$-semistable Higgs bundle. Then, there exist a saturated $\theta$-invariant subsheaf $V_0 \subseteq V$ with $\mu_{L_2}(V_0) > \mu_{L_2}(V)$. Let $V'$ is any $\theta$-
 invariant saturated subsheaf with this property. Define, 
 \begin{align*}
  r(V') := \frac{\mu_{L_1}(V) - \mu_{L_1}(V')}{\mu_{L_2}(V') - \mu_{L_2}(V)}.
 \end{align*}
Then, $\mu_{L_1+r(V')L_2}(V') = \mu_{L_1+r(V')L_2}(V)$. We note that $L_0 := L_1 + r(V_0)L_2$ is a polarization on $X$. If $r(V') < r(V_0)$, then $\mu_{L_0}(V') > \mu_{L_0}(V)$. By Grothendieck's Lemma ( See Lemma in 1.7.9 in \cite{6} ) , the family of saturated subsheaves
$V'$ with $\mu_{L_0}(V') > \mu_{L_0}(V)$ is bounded. So, there are only finitely many numbers $r(V')$ which are smaller than $r(V)$. We can further choose $V_0$ in such a way that $r(V_0)$ is minimal. Then, $V$ and $V_0$ are $L_0$-Higgs semistable with 
$\mu_{L_0}(V_0) = \mu_{L_0}(V)$. So, we have an exact sequence of torsion free sheaves 
\begin{align}\label{seq3.2.1}
0 \longrightarrow  V_0 \longrightarrow V \longrightarrow V_1 \longrightarrow 0
\end{align}
 with $\mu_{L_0}(V_0) = \mu_{L_0}(V) = \mu_{L_0}(V_1)$.
 
 Let $\overline{\theta}$ : $V_1 \longrightarrow V_1 \otimes \Omega^1_X$ be the induced map. Our claim is that $(V_1,\overline{\theta})$ are $L_0$-semistable Higgs sheaves.  
Now, let $\overline{W_1}$ be a $\overline{\theta}$-invariant subsheaf of $V_1$. Then, we have an exact sequence of torsion-free sheaves
\begin{align*}
 0\longrightarrow V_0 \longrightarrow W_1 \longrightarrow \overline{W_1}\longrightarrow 0
\end{align*}
where $W_1$ is $\theta$-invariant subsheaf of $V$ containing $V_0$. We have, $\mu_{L_0}(W_1) \leq \mu_{L_0}(V) = \mu_{L_0}(V_0)$. We also have, by Lemma 2 (Chapter 4 in \cite{3})
\begin{align*}
min(\mu_{L_0}(V_0),\mu_{L_0}(\overline{W}_1)) \leq \mu_{L_0}(W_1) \leq max(\mu_{L_0}(V_0),\mu_{L_0}(\overline{W}_1))
\end{align*}
Hence, $\mu_{L_0}(\overline{W}_1) \leq \mu_{L_0}(W_1) \leq \mu_{L_0}(V) = \mu_{L_0}(V_1)$. Hence,$(V_1,\overline{\theta})$ is $L_0$-semistable Higgs sheaf. Therefore, by Bogomolov's Inequality, $\triangle({V_0}) \geq 0, \triangle({V_1}) \geq 0$.

 We denote $\xi \equiv (r.c_1(V_0)-r_0.C_1(V)) \in $ Num$(X)_\mathbb{R}$ where $r$ and $r_0$ denotes the ranks of $V$ and $V_0$ respectively. Hence, $\xi.L_0 = 0$ and $\xi.L_2 >  0$. So, by Hodge Index Theorem, $\xi^2 < 0$. 
 On the other hand, from the exact sequence (\ref{seq3.2.1}) we have,
 \begin{align*}
  0=\triangle(V)=\frac{r}{r_1}\triangle(V_0)+\frac{r}{r-r_1}\triangle(V_1)-\frac{\xi^2}{r_1(r-r_1)}
 \end{align*}
Since $\triangle(V_0) \geq 0  $ and $\triangle(V_1) \geq 0$, we have, $\xi^2 \geq 0$ which is a contradiction. Hence, our result is proved.
\end{proof}

\begin{xrem}\label{remark2}
\rm  A similar argument as in Proposition \ref{prop3.2} will imply that if $\mathcal{E} = (V,\theta)$ is $L$-stable Higgs bundle on a smooth algebraic surface $X$ with a fixed polarization $L$ on $X$ and $\triangle(V)=0 $, then the stability of the Higgs 
bundle $\mathcal{E}=(V,\theta)$ is independent of the polarization chosen. 
\end{xrem}

\begin{thm}\label{thm3.3}
\rm Let  $\pi:X \longrightarrow C $  be a ruled surface with a fixed polarization $L$ on $X$. Let $\mathcal{E} = (V,\theta)$ be a semistable Higgs bundle of rank $r$ on $C$. Then, the pullback $\pi^*(\mathcal{E})=(\pi^*(V),d\pi(\theta))$ 
is $L$-semistable Higgs bundle on $X$. 
\begin{proof}
Let $H$ be a very ample line bundle on $X$. By Bertini's Theorem, there exist a smooth projective curve $B$ in the linear system $\mid H \mid$ . Let us consider the induced map between smooth projective curves 
\begin{align*}
 \pi_B:B\hookrightarrow X \longrightarrow C . 
\end{align*}

   Since $B.f=H.f > 0$ , $B$ is not contained in any fiber. Hence, $\pi_B$ is a finite separable morphism between two smooth projective curves and by Lemma 3.3 in \cite{2}, $\pi^*_B (\mathcal{E})$ is a semistable Higgs bundle on $B$. 
   Now, suppose  $\pi^*(\mathcal{E})$ is not $H$-semistable Higgs bundle, then there exist $d\pi(\theta)$-invariant subbundle $W$ of $\pi^*(V)$ such that $\mu_H(W) > \mu_H(\pi^*(V))$. Hence, we have
\begin{align*}
\mu(W\vert_B) > \mu(\pi^*(V)\vert_B)
\end{align*}
But, $W\vert_B$ is a $d\pi_B(\theta) $-invariant subbundle of $\pi_B^*(V)=\pi^*(V)\vert_B$. Thus, $\pi_B^*(\mathcal{E}) = (\pi_B^*(V),d\pi_B(\theta))$ is not a semistable Higgs bundle on 
 $B$, which is a contradiction. Therefore, $\pi^*(\mathcal{E})=(\pi^*(V),d\pi(\theta))$ is a $H$-semistable Higgs bundle on $X$. Now the discriminant of $\pi^*(V)$ being  0, by Proposition \ref{prop3.2}, $\pi^*(\mathcal{E})=(\pi^*(V),d\pi(\theta))$ is a
 $L$-semistable Higgs bundle on $X$ for any polarization $L$ on $X$.  
\end{proof}
\end{thm}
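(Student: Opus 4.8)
The plan is to reduce $L$-semistability of $\pi^*(\mathcal{E})$ on the surface to the known behaviour of Higgs semistability under pullback along finite separable morphisms of curves (Lemma 3.3 in \cite{2}), applied to a generic hyperplane section of $X$, and then to eliminate the dependence on the polarization using Proposition \ref{prop3.2}.

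First I would fix a very ample line bundle $H$ on $X$ and invoke Bertini's theorem (available since $\operatorname{char} k = 0$) to produce a smooth irreducible projective curve $B \in |H|$. The crucial point is that $B \cdot f = H \cdot f > 0$ for a fibre $f$ of $\pi$, so $B$ meets every fibre and is contained in none; therefore the composite $\pi_B \colon B \hookrightarrow X \xrightarrow{\pi} C$ is a finite morphism of smooth projective curves, and it is separable because $k$ has characteristic $0$. Since $\mathcal{E}$ is semistable on $C$, Lemma 3.3 in \cite{2} then yields that $\pi_B^*(\mathcal{E}) = (\pi_B^*(V), d\pi_B(\theta))$ is a semistable Higgs bundle on $B$, where under $B \hookrightarrow X$ we identify $\pi_B^*(V) = \pi^*(V)|_B$ and $d\pi_B(\theta) = d\pi(\theta)|_B$ via $\pi^*(\Omega^1_C)|_B = \pi_B^*(\Omega^1_C)$.

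Next I would argue for $H$-semistability by contradiction. If $\pi^*(\mathcal{E})$ were not $H$-semistable, there would exist a $d\pi(\theta)$-invariant saturated subsheaf $W \subsetneq \pi^*(V)$ with $\mu_H(W) > \mu_H(\pi^*(V))$; as $W$ is then locally free (a saturated subsheaf of a bundle on a smooth surface is reflexive, hence a bundle) and $B$ is generic, $\deg(W|_B) = c_1(W)\cdot H$, so $\mu(W|_B) = \mu_H(W) > \mu_H(\pi^*(V)) = \mu(\pi_B^*(V)|_B)$. But $W|_B$ is a $d\pi_B(\theta)$-invariant subbundle of $\pi_B^*(V)$, contradicting the semistability of $\pi_B^*(\mathcal{E})$ established above. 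Hence $\pi^*(\mathcal{E})$ is $H$-semistable. Finally, because $c_1(\pi^*(V)) = \pi^*(c_1(V))$ has zero self-intersection and $c_2(\pi^*(V)) = 0$, we get $\triangle(\pi^*(V)) = 0$, so Proposition \ref{prop3.2} promotes $H$-semistability to $L$-semistability for an arbitrary polarization $L$ on $X$.

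The main difficulty I expect is not in the overall architecture but in the two compatibility checks hidden in it: that the restriction of a destabilizing $\theta$-invariant subsheaf to the generic curve $B$ remains destabilizing with the correct slope (which is why $B$ must be chosen outside the finitely many ``bad'' members of $|H|$, using the boundedness of the potential destabilizers in the spirit of the proof of Proposition \ref{prop3.2}), and that the Higgs field restricts to the pulled-back Higgs field on $B$. A tempting shortcut --- restricting instead to a fibre $f \cong \mathbb{P}^1$, on which $\pi^*(V)$ is trivial --- does not work, since a subsheaf of $\mathcal{O}_{\mathbb{P}^1}^{\oplus r}$ need not be trivial and may have negative degree, so no contradiction arises fibrewise; this is what forces the hyperplane-section approach.
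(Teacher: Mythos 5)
Your proposal is correct and follows essentially the same route as the paper's own proof: Bertini to get a smooth $B \in |H|$, finiteness and separability of $\pi_B$, Lemma 3.3 of \cite{2} to pull back semistability to $B$, a restriction argument to rule out an $H$-destabilizing $d\pi(\theta)$-invariant subsheaf, and Proposition \ref{prop3.2} with $\triangle(\pi^*(V))=0$ to pass from $H$- to $L$-semistability. Your added remarks on choosing $B$ generically relative to the (bounded family of) potential destabilizers and on identifying $d\pi_B(\theta)$ with $d\pi(\theta)|_B$ make explicit two points the paper leaves implicit, but the argument is the same.
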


\begin{prop}\label{prop3.4}
\rm Let $\pi: X = \mathbb{P}_C(E)\longrightarrow C $ be a ruled surface for some rank 2 vector bundle $E$ on $C$ . Then, the natural map $\Omega^1_C \xrightarrow {\eta} \pi_*(\Omega^1_X)$ is an isomorphism.
 \begin{proof}
  Consider the exact sequence 
  \begin{align}\label{seq3.4.1}
   0 \longrightarrow \pi^*(\Omega^1_C) \longrightarrow \Omega^1_X \longrightarrow \Omega^1_{X/C} \longrightarrow 0
  \end{align}
  Applying $\pi_*$ to the exact sequence (\ref{seq3.4.1}), we get the following long exact sequence,
  \begin{align}\label{seq3.4.2}
   0 \longrightarrow \Omega^1_C \longrightarrow \pi_*(\Omega^1_X) \longrightarrow \pi_*(\Omega^1_{X/C}) \longrightarrow \Omega^1_C \otimes R^1\pi_*(\mathcal{O}_X) \longrightarrow ...
  \end{align}
We also have 
\begin{align}\label{seq3.4.3}
 0\longrightarrow \Omega^1_{X/C} \longrightarrow (\pi^*(E))\otimes \mathcal{O}_{\mathbb{P}(E)}(-1) \longrightarrow \mathcal{O}_{\mathbb{P}(E)} \longrightarrow 0
\end{align}
Since $\pi$ is a smooth map of relative dimension 1 between two nonsingular varieties , by Proposition 10.4 in \cite{5}(chapter III, Page 270), $\Omega^1_{X/C}$ is a locally free sheaf of rank 1 on $X$. Applying $\pi_*$ to exact sequence(\ref{seq3.4.3}), 
we get
\begin{align}\label{seq3.4.4}
 0\longrightarrow \pi_*(\Omega^1_{X/C})\longrightarrow \pi_*((\pi^*(E))\otimes \mathcal{O}_{\mathbb{P}(E)}(-1)) \longrightarrow .. 
\end{align}
By Projection formula, we have $ \pi_*((\pi^*(E))\otimes \mathcal{O}_{\mathbb{P}(E)}(-1)) = E \otimes \pi_*(\mathcal{O}_{\mathbb{P}(E)}(-1)) = 0.$ Therefore, from exact sequence (\ref{seq3.4.4}), we get, $\pi_*(\Omega^1_{X/C}) = 0 $ and 
hence from exact sequence (\ref{seq3.4.2}), we have, the natural map $ \Omega^1_C \xrightarrow{\eta} \pi_*(\Omega^1_X)$ is an isomorphism.
\end{proof}
\end{prop}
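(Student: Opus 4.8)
The plan is to push the relative cotangent sequence
\begin{align*}
0 \longrightarrow \pi^*(\Omega^1_C) \longrightarrow \Omega^1_X \longrightarrow \Omega^1_{X/C} \longrightarrow 0
\end{align*}
forward along $\pi$ and read the statement off the resulting long exact sequence, the whole matter reducing to the vanishing of $\pi_*(\Omega^1_{X/C})$. First I would recall that $\eta$ is, by adjunction, the map corresponding to the canonical inclusion $\pi^*(\Omega^1_C) \hookrightarrow \Omega^1_X$ above; since $\pi$ is proper with geometrically connected fibres one has $\pi_*(\mathcal{O}_X) = \mathcal{O}_C$, so the projection formula gives $\pi_*\pi^*(\Omega^1_C) \cong \Omega^1_C$ and, crucially, the adjunction unit $\Omega^1_C \to \pi_*\pi^*(\Omega^1_C)$ is exactly this isomorphism. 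Consequently the map $\Omega^1_C \to \pi_*(\Omega^1_X)$ obtained by applying $\pi_*$ to the inclusion is literally $\eta$.

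Next I would write down the beginning of the long exact sequence for $R^i\pi_*$ of the cotangent sequence,
\begin{align*}
0 \longrightarrow \Omega^1_C \xrightarrow{\ \eta\ } \pi_*(\Omega^1_X) \longrightarrow \pi_*(\Omega^1_{X/C}) \longrightarrow R^1\pi_*\pi^*(\Omega^1_C) \longrightarrow \cdots
\end{align*}
From this $\eta$ is automatically injective, and it is an isomorphism precisely when $\pi_*(\Omega^1_{X/C}) = 0$. So the only real step left is this vanishing.

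For that I would argue on fibres: $\Omega^1_{X/C}$ is locally free of rank one (as $\pi$ is a smooth morphism of relative dimension one between smooth varieties, by \cite{5}), and for a fibre $f \cong \mathbb{P}^1$ one has $\Omega^1_{X/C}\vert_f \cong \Omega^1_{\mathbb{P}^1} \cong \mathcal{O}_{\mathbb{P}^1}(-2)$, which has no nonzero global sections; hence $\pi_*(\Omega^1_{X/C}) = 0$ by cohomology and base change. A variant that sidesteps base change is to push forward the relative Euler sequence
\begin{align*}
0 \longrightarrow \Omega^1_{X/C} \longrightarrow \pi^*(E)\otimes \mathcal{O}_{\mathbb{P}(E)}(-1) \longrightarrow \mathcal{O}_{\mathbb{P}(E)} \longrightarrow 0
\end{align*}
and to observe, via the projection formula, that $\pi_*\bigl(\pi^*(E)\otimes\mathcal{O}_{\mathbb{P}(E)}(-1)\bigr) \cong E \otimes \pi_*\bigl(\mathcal{O}_{\mathbb{P}(E)}(-1)\bigr) = 0$, since $\mathcal{O}_{\mathbb{P}^1}(-1)$ has vanishing $H^0$. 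Combined with the long exact sequence above this finishes the argument. I do not expect a genuine obstacle anywhere; the only point that deserves care is the identification, made in the first paragraph, of the isomorphism produced by the long exact sequence with the natural map $\eta$ — without it one has merely shown that $\Omega^1_C$ and $\pi_*(\Omega^1_X)$ are abstractly isomorphic.
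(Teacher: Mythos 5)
Your proposal is correct and follows essentially the same route as the paper: push forward the relative cotangent sequence and reduce everything to the vanishing of $\pi_*(\Omega^1_{X/C})$, which you establish (in your second variant) exactly as the paper does, via the relative Euler sequence and the projection formula. Your additional care in identifying the first map of the long exact sequence with the natural map $\eta$ is a point the paper passes over silently, and is worth making explicit.
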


\begin{thm}\label{thm3.5}
 \it Let $L$ be a fixed polarization on a ruled surface $\pi: X\longrightarrow C $. Let $\mathcal{E}=(V,\theta)$ be a $L$-semistable Higgs bundle of rank $r$ on $X$ with $c_1(V)=\pi^*(\bf d\it)$, for some divisor \bf d \it of degree $d$ on $C$. Then,
 $c_2(V) \geq 0$ and $c_2(V) = 0$ iff there exists a semistable Higgs bundle $\mathcal{W}=(W,\psi)$ on $C$ such that $\pi^*(\mathcal{W})=(\pi^*(W),d\pi(\psi)) \cong \mathcal{E}$ on $X$.\rm
\begin{proof}
 By Bogomolov's Inequality, $2rc_2(V) \geq (r - 1)c_1^2(V)$ = 0. Hence, $c_2(V) \geq 0$. 
 
 If $c_2(V)$ = 0, then $\triangle(V)=0$. Our claim is that, in this case, for a generic fiber $f$, $V\vert_f$ is slope semistable vector bundle on $f$ and hence,
 $V\vert_f \cong \mathcal{O}_f^{\oplus r}$ ( as deg($V\vert_f$) = $c_1(V).f$ = 0). If not, then, $V\vert_f = \mathcal{O}_f(a_1) \oplus \mathcal{O}_f(a_2) \oplus \cdot \cdot\cdot \cdot\oplus\mathcal{O}_f(a_r)$ for some integers $a_1, a_2,..,a_r$ 
 such that not all the $a_j's$ are zero . Without loss of generality we assume that $a_1 \geq a_2 \geq a_3 \geq \cdot \cdot \cdot \geq a_r$. Further one can assume that $a_1 > 0$ as deg($V\vert_f$) = $\sum a_j = 0$. Let
 $a_1=a_2= \cdot\cdot \cdot=a_i > a_{i+1}$ for some $1 \leq i <r$. Consider 
 $W_f = \mathcal{O}_f(a_1) \oplus \cdot \cdot \cdot\oplus\mathcal{O}_f(a_i)$. Then, $W_f$ is slope semistable and deg($W_f) > 0$. Consider the exact sequence
\begin{align*}
 0\longrightarrow\pi^*(\Omega^1_C)\longrightarrow\Omega^1_X\longrightarrow\Omega^1_{X\vert C}\longrightarrow0
\end{align*}
Restricting the above exact sequence to a generic fibre $f$, we get 
\begin{align*}
 0\longrightarrow\mathcal{O}_f\longrightarrow\Omega^1_X\vert_f\longrightarrow\Omega^1_{X\vert C}\vert_f\longrightarrow0
\end{align*}

By Corollary 2.11 (in  Chapter 5 of \cite{5}), we have, deg($\Omega^1_{X}\vert_{f}$) = - 2 and hence, $\Omega^1_{X}\vert_f = \mathcal{O}_f\oplus\mathcal{O}_f(-2)$ so that deg($W_f) > $ deg($\mathcal{O}_f(a_l))$ and
 deg($W_f) > $ deg($\mathcal{O}_f(a_l-2)$) for  $(i+1) \leq l \leq r$. As $W_f$ is slope semistable, this implies that there does not exists any non-zero map from 
$W_f$ to $\mathcal{O}_f(a_l)\otimes  \Omega^1_X\vert_{f}$ for $(i+1) \leq l \leq r$.

Now, $V\vert_{f} \otimes \Omega^1_X\vert_{f}$ = $\{ W_f \oplus \mathcal{O}_f(a_{i+1}) \oplus \cdot\cdot\cdot\cdot\cdot \oplus \mathcal{O}_f(a_r) \} \otimes \Omega^1_X\vert_{f}$

= $ (W_f \otimes \Omega^1_X\vert_{f}) \oplus ( \mathcal{O}_f(a_{i+1}) \otimes \Omega^1_X\vert_{f}) \oplus \cdot\cdot\cdot\cdot \oplus (\mathcal{O}_f(a_r) \otimes \Omega^1_X\vert_{f}).$

Hence, $ \theta\vert_{f} : W_f \longrightarrow W_f \otimes \Omega^1_X\vert_{f}$. We extend $W_f$ to a $\theta$-invariant subbundle $W \hookrightarrow V$ 
such that the quotient is also torsion-free. Since $\theta\vert_f$ preserves $W_f$, $W$ is also preserved by $\theta$. Note that, $c_1(W)\cdot f = $deg$(W_f) > 0$.  Hence, for a large $m\gg0$, $\mu_{L+mf}(W) > \mu_{L+mf}(V)$. Since $\triangle(V) = 0$,
this contradicts that $(V,\theta)$ is $(L+mf)$-semistable Higgs bundle. This proves our claim.

Therefore, $V\vert_f \cong \mathcal{O}_f^{\oplus r}$ for generic fiber $f$ and also $c_2(V) = 0$. By Lemma \ref{lemma3.1}, $V \cong \pi^*(W)$ for some vector bundle $W$ on $C$. Note that, by projection formula, we have 

$H^0(X,End(V)\otimes \pi^*(\Omega^1_C)) \cong H^0(X,End(V)\otimes \Omega^1_X)\cong H^0(C,End(W)\otimes \Omega^1_C)$. 

Hence, the natural inclusion map $H^0(X,End(V)\otimes \pi^*(\Omega^1_C)) \hookrightarrow H^0(X,End(V)\otimes \Omega^1_X)$ is also surjective 
i.e. every Higgs-field on $V$ factors through $V\otimes\pi^*(\Omega^1_C).$
Now consider the Higgs-field $\psi$  defined as follows (using Proposition \ref{prop3.4} and projection formula)
  \begin{align*}
   \psi := \pi_*(\theta) : \pi_*(\pi^*(W)) \cong  W \longrightarrow \pi_*(\pi^*(W)\otimes \Omega^1_X) \cong W\otimes \Omega^1_C
  \end{align*}
 Since $C$ is a curve, The condition $\psi \wedge \psi = 0 $ is automatically satisfied. Hence $\mathcal{W} := (W,\psi)$ is a well-defined Higgs bundle on $C$.
 Now, consider the following commutative diagram.
 \begin{center}
 \begin{tikzcd}
V \arrow[r, "\theta"] \arrow[d, "\cong"]
& V\otimes\pi^*(\Omega^1_C) \arrow[r, "id\otimes \eta" ]\arrow[d,"\cong"]
& V\otimes\Omega^1_X \arrow[d,"\cong"]\\
\pi^*(W) \arrow[r, "\pi^*(\psi)" ]
& \pi^*(W)\otimes\pi^*(\Omega^1_C) \arrow[r,"id\otimes\eta"]
& \pi^*(W)\otimes\Omega^1_X
\end{tikzcd}
\end{center}
From the above commutative diagram, we have, $\pi^*(\mathcal{W}) \cong \mathcal{E}$.
 
   Our claim is that $(W,\psi)$ is Higgs semistable on $C$. If not, then, there is a $\psi$-invariant subbundle, say, $W_1$ of $W$ such that $\mu(W_1) > \mu(W)$. This implies 
   \begin{align*}
    \mu_L(\pi^*(W_1)) = \mu(W_1)(L.f) > \mu(W)(L.f) = \mu_L(V)
   \end{align*}
 But, $\pi^*(W_1)$ is $\theta$-invariant subbundle of $V$, and hence it contradicts that $(V,\theta)$ is $L$-semistable Higgs bundle. Hence our claim is proved.
\end{proof}
\end{thm}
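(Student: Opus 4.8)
The plan is to split the statement into the numerical part and the two implications of the ``iff''. For the inequality: since $c_1(V) = \pi^*(\mathbf{d})$ is numerically $d\cdot f$ with $f^2 = 0$ in $\mathrm{Num}(X)$, we get $c_1(V)^2 = 0$, so the discriminant is $\triangle(V) = 2rc_2(V)$; Bogomolov's inequality for $L$-semistable Higgs bundles (valid in characteristic $0$ by the results cited in the Preliminaries) gives $\triangle(V)\ge 0$, hence $c_2(V)\ge 0$. For the ``if'' direction of the equivalence: if $\pi^*(\mathcal{W})\cong\mathcal{E}$ then $V\cong\pi^*(W)$, and a bundle pulled back from a curve has vanishing second Chern class (for instance by Lemma \ref{lemma3.1}), so $c_2(V)=0$.

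The substantial direction is the ``only if'': assuming $c_2(V)=0$, hence $\triangle(V)=0$, I would first prove that for a generic fibre $f$ the restriction $V|_f$ is a semistable bundle on $f\cong\mathbb{P}^1$, which together with $\deg(V|_f)=c_1(V)\cdot f=0$ forces $V|_f\cong\mathcal{O}_f^{\oplus r}$. Suppose not: write $V|_f=\bigoplus_j\mathcal{O}_f(a_j)$ with $a_1\ge\cdots\ge a_r$ and $\sum a_j=0$, so $a_1>0$; let $a_1=\cdots=a_i>a_{i+1}$ and $W_f=\bigoplus_{j\le i}\mathcal{O}_f(a_j)$, the maximal destabilizing piece, which is semistable of positive degree. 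Restricting the relative cotangent sequence to $f$ and using $\deg(\Omega^1_X|_f)=-2$ gives $\Omega^1_X|_f\cong\mathcal{O}_f\oplus\mathcal{O}_f(-2)$, so $\mathrm{Hom}(W_f,\mathcal{O}_f(a_l)\otimes\Omega^1_X|_f)=0$ for $l>i$ because $W_f$ is semistable of slope $a_1>a_l\ge a_l-2$; hence $\theta|_f$ carries $W_f$ into $W_f\otimes\Omega^1_X|_f$. Extending $W_f$ to a saturated subsheaf $W\subset V$ over $X$ — which is automatically $\theta$-invariant since it is so over a dense set of fibres — produces a $\theta$-invariant subsheaf with $c_1(W)\cdot f=\deg(W_f)>0$, so $\mu_{L+mf}(W)>\mu_{L+mf}(V)$ for $m\gg 0$; since $\triangle(V)=0$, Proposition \ref{prop3.2} says $(V,\theta)$ is $(L+mf)$-semistable, a contradiction.

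With $V|_f\cong\mathcal{O}_f^{\oplus r}$ for generic $f$ and $c_2(V)=0$, Lemma \ref{lemma3.1} yields $V\cong\pi^*(W)$ for a vector bundle $W$ on $C$. To descend the Higgs field I would invoke Proposition \ref{prop3.4} and the projection formula: $H^0(X,\mathcal{E}nd(V)\otimes\Omega^1_X)\cong H^0(C,\mathcal{E}nd(W)\otimes\pi_*\Omega^1_X)\cong H^0(C,\mathcal{E}nd(W)\otimes\Omega^1_C)$, so every Higgs field on $V$ factors through $V\otimes\pi^*\Omega^1_C$; I then set $\psi:=\pi_*(\theta):W\to W\otimes\Omega^1_C$, which satisfies $\psi\wedge\psi=0$ automatically since $C$ is a curve, and a commutative-diagram chase identifies $\pi^*(W,\psi)$ with $(V,\theta)$. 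Semistability of $(W,\psi)$ is then immediate: a $\psi$-invariant destabilizing $W_1\subset W$ would pull back to a $\theta$-invariant $\pi^*W_1\subset V$ with $\mu_L(\pi^*W_1)=\mu(W_1)(L\cdot f)>\mu(W)(L\cdot f)=\mu_L(V)$, contradicting $L$-semistability of $\mathcal{E}$.

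I expect the main obstacle to be the fibrewise semistability claim: one must show both that the maximal destabilizing subbundle of $V|_f$ is $\theta|_f$-invariant — which is exactly where the explicit splitting $\Omega^1_X|_f\cong\mathcal{O}_f\oplus\mathcal{O}_f(-2)$ and the semistability of $W_f$ are used — and that this fibrewise subbundle extends to a genuine $\theta$-invariant subsheaf of $V$ destabilizing it for the tilted polarization $L+mf$. The contradiction then rests on Proposition \ref{prop3.2}, i.e. on the vanishing of $\triangle(V)$ making $L$-semistability insensitive to the choice of polarization.
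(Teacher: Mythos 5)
Your proposal is correct and follows essentially the same route as the paper's proof: Bogomolov's inequality for the bound $c_2(V)\ge 0$, the fibrewise semistability claim proved by showing the maximal destabilizing piece $W_f$ is $\theta|_f$-invariant via the splitting $\Omega^1_X|_f\cong\mathcal{O}_f\oplus\mathcal{O}_f(-2)$ and then destabilizing with the tilted polarization $L+mf$ using Proposition \ref{prop3.2}, followed by Lemma \ref{lemma3.1}, the descent of the Higgs field through Proposition \ref{prop3.4}, and the pullback argument for semistability of $(W,\psi)$. The only addition is your explicit treatment of the easy ``if'' direction, which the paper leaves implicit.
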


\section{Stability under pullback}

\begin{thm}\label{thm4.1}
\it Let $L$ be a fixed polarization on a ruled surface $\pi: X\longrightarrow C $. Then, for any stable Higgs bundle $\mathcal{W} = (W,\psi)$ on $C$, the pullback Higgs bundle $\pi^*(\mathcal{W})$ is $L$-stable Higgs bundle on $X$. 
Conversely, if $\mathcal{V} = (V,\theta)$ is a $L$-stable Higgs bundle on $X$ with $c_1(V) = \pi^*(\bf d \it)$ for some divisor \bf d \it on $C$ and $c_2(V)= 0$, then $\mathcal{V} \cong \pi^*(\mathcal{W})$ for some stable Higgs
bundle $\mathcal{W}=(W,\psi)$ on $C$.
\begin {proof}
 If $\pi^*(\mathcal{W})$ is strictly $L$-semistable Higgs bundle, then there is a short exact sequence of torsion-free sheaves
\begin{align}\label{seq4.1.1}
0 \longrightarrow  V_1 \longrightarrow \pi^* (W)\longrightarrow V_2 \otimes I_Z \longrightarrow 0
\end{align}
 where  $V_1$ is a $d\pi(\psi)$-invariant subbundle of $\mathcal{V}$ of rank \it m \rm ,  $V_2$  is vector bundle of rank  \it n \rm on $X$ and \it Z \rm is a closed subscheme of co-dimension 2 in $X$ having $\ell(Z)$ number of points
 in its support, such that
 \begin{align*}
 \mu_L(V_1)=\mu_L(V_2 \otimes \it I_Z) = \mu_L(\pi^*(W)).
 \end{align*}
   Restricting the above exact sequence (\ref{seq4.1.1}) to a generic fiber $f$ such that  Supp(\it Z \rm)$\bigcap f = \emptyset$, we have 
\begin{align}\label{seq4.1.2}
0 \longrightarrow V_1\vert_f \longrightarrow \pi^*(W)\vert_f \longrightarrow V_2\vert_f \longrightarrow 0
\end{align}      
     Since $\pi^*(W)\mid_f  \cong  \mathcal{O}_f^{\oplus r}$, it is slope semistable of degree 0 on $f \cong \mathbb{P}^1$ and hence deg($V_2\mid_f) \geq 0$. Our claim is that deg($V_2\vert_f)> 0$. If not, let deg($V_2\vert_f)=0$. Hence, 
     from the above exact sequence (\ref{seq4.1.2}), for a generic fiber $f$, we get,  deg$(V_1\vert_f)=0$, deg$(V_2\vert_f)=0$ and
     $V_1\vert_f$, $V_2\vert_f$ are semistable on $f\cong \mathbb{P}^1$. Therefore, for a generic fiber $f$, $V_1\vert_f\cong \mathcal{O}^{\oplus m}_f$ and $V_2\vert_f\cong \mathcal{O}^{\oplus n}_f$. By Lemma \ref{lemma3.1},  we have $c_2(V_1)\geq 0$, 
     $c_2(V_2)\geq 0$. We also have from exact sequence (\ref{seq4.1.1})
\begin{align*}    
c_2(\pi^*(W))=\pi^*(c_2(W))= 0 = c_2(V_1)\rm + c_2(V_2 \otimes \it I_Z \rm)= c_2\rm(V_1\rm)+ c_2\rm(V_2\rm) + \it n \rm.\ell(\it Z \rm)
\end{align*}
    which implies c$_2$\rm($V_1$\rm)=c$_2$\rm($V_2$\rm)= $\ell$(\it Z \rm)= 0.  Hence by the Lemma \ref{lemma3.1}, $V_1 \cong \pi^*(W_1$) and $V_2 \cong \pi^*(W_2$) for some vector bundle $W_1$ and $W_2$ on $C$ and \it Z \rm = $\emptyset$ in the exact 
    sequence (\ref{seq4.1.1}). Note that since $V_1$ is $d\pi(\psi)$-invariant, it will imply $W_1$ is $\psi$-invariant. Since in this case, 
\begin{align*}
\mu_L(V_1)= \mu(W_1)(L.f)=\mu_L(\pi^*(W))=\mu(W)(L.f)
\end{align*}
  we have $\mu(W_1) = \mu(W)$ for the $\psi$-invariant subbundle $ W_1 \longrightarrow W$  which contradicts the Higgs stability of $\mathcal{W}$. Thus deg($V_2\vert_f)> 0$ and hence deg($V_1\vert_f)< 0$.    
Now choose a positive integer  $i$  such that $L_i$ := $L+if$ is ample. We then have for all $d\pi(\psi)$-invariant subbundle $0 \longrightarrow M \longrightarrow \pi^*(W) , \mu_{L_i}$($M)< \mu_{L_i}$($\pi^*(W))$. Hence $\pi^*(\mathcal{W})$ 
is ${L_ i}$-stable Higgs bundle and the discriminant of $\pi^*(W)$ being 0, by Remark \ref{remark2}, $\pi^*(\mathcal{W})$ is $L$ -stable Higgs bundle for any polarization $L$ on $X$.
 
 \vspace{2mm}
 
 Conversely, if $\mathcal{V}=( V ,\theta)$ is a $L$-stable Higgs bundle on $X$ with $c_1(V)=\pi^*$(\bf d\rm) for some divisor \bf d \rm on $C$ and $c_2(V)= 0 $, then by Theorem \ref{thm4.1}, $\mathcal{V} \cong \pi^*(\mathcal{W})$ for some 
  semistable Higgs bundle $\mathcal{W}=(W,\psi)$ on $C$. If $\mathcal{W}$ is strictly semistable Higgs bundle, then there is an exact sequence of $\psi$-invariant subbundle of $W$
\begin{align}\label{seq4.1.3}
0 \longrightarrow W_1 \longrightarrow W \longrightarrow W_2 \longrightarrow 0 
\end{align}
such that $\mu(W_1)= \mu(W) =\mu(W_2)$. The exact sequence (\ref{seq4.1.3}) will then pullback to an exact sequence
\begin{align*}
0 \longrightarrow \pi^*(W_1) \longrightarrow V \longrightarrow  \pi^*(W_2) \longrightarrow 0
\end{align*}
such that $\mu_L(\pi^*(W_1)) = \mu_L(V) = \mu_L(\pi^*(W_2))$. Note that $\pi^*(W_1)$ and $\pi^*(W_2)$ are $\theta$-invariant subbundle of $V$. 
Hence, this contradicts that $\mathcal{V}$ is $L$-stable Higgs bundle. Therefore, $\mathcal{W}$  is stable Higgs bundle on $C$ such that $\pi^*(\mathcal{W}) \cong \mathcal{V}$.
\end{proof}
\end{thm}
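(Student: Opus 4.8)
The plan is to treat the two implications separately, using Theorem \ref{thm3.3}, Theorem \ref{thm3.5}, Lemma \ref{lemma3.1}, Proposition \ref{prop3.2} and Remark \ref{remark2} as black boxes. For the first assertion, given a stable Higgs bundle $\mathcal{W}=(W,\psi)$ on $C$, I would start from Theorem \ref{thm3.3}, which already yields that $\pi^*\mathcal{W}$ is $L'$-semistable for every polarization $L'$ (here $\triangle(\pi^*W)=0$). Then I would argue by contradiction: if $\pi^*\mathcal{W}$ is not $L$-stable, there is a saturated $d\pi(\psi)$-invariant subsheaf $V_1\subsetneq\pi^*W$ of some rank $m$ with $\mu_L(V_1)=\mu_L(\pi^*W)$, fitting in an exact sequence $0\to V_1\to\pi^*W\to V_2\otimes I_Z\to 0$ with $V_2$ locally free of rank $n=r-m$, $Z$ a zero-dimensional subscheme, and all three sheaves of equal $L$-slope. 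Exactly as in Proposition \ref{prop3.2}, $V_1$ and $V_2\otimes I_Z$ are $L$-Higgs-semistable, so the Bogomolov inequality for Higgs sheaves gives $\triangle(V_1)\ge 0$ and $\triangle(V_2\otimes I_Z)\ge 0$. Writing $\xi:=r\,c_1(V_1)-m\,c_1(\pi^*W)\in\mathrm{Num}(X)_{\mathbb{R}}$, the equality of slopes forces $\xi\cdot L=0$, hence $\xi^2\le 0$ by the Hodge index theorem, while the discriminant identity
\[
0=\triangle(\pi^*W)=\tfrac{r}{m}\triangle(V_1)+\tfrac{r}{n}\triangle(V_2\otimes I_Z)-\tfrac{\xi^2}{mn}
\]
forces $\xi^2\ge 0$. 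Thus $\xi=0$, so $c_1(V_1)$ is a rational multiple of the fiber class and $\triangle(V_1)=\triangle(V_2\otimes I_Z)=0$.

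Next I would restrict to a generic fiber $f$ missing $\mathrm{Supp}(Z)$: then $\deg(V_1|_f)=c_1(V_1)\cdot f=0$, so $V_1|_f$ is a degree-$0$ subsheaf of $\pi^*W|_f\cong\mathcal{O}_f^{\oplus r}$ on $f\cong\mathbb{P}^1$, whence $V_1|_f\cong\mathcal{O}_f^{\oplus m}$; and $\triangle(V_1)=0$ together with $c_1(V_1)^2=0$ gives $c_2(V_1)=0$. Lemma \ref{lemma3.1} then yields $V_1\cong\pi^*W_1$ for a rank-$m$ subbundle $W_1\subsetneq W$, and since $\pi^*W_1$ is $d\pi(\psi)$-invariant, $W_1$ is $\psi$-invariant (Higgs fields on $\pi^*(-)$ correspond to Higgs fields on $(-)$, as in the proof of Theorem \ref{thm3.5}). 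Finally $\mu_L(\pi^*W_1)=\mu_L(\pi^*W)$ translates into $\mu(W_1)=\mu(W)$ with $0<m<r$, contradicting the stability of $\mathcal{W}$; hence $\pi^*\mathcal{W}$ is $L$-stable. One could equivalently, as in the $\triangle=0$ philosophy of Remark \ref{remark2}, pass to $L_i=L+if$ with $i\gg 0$, rule out destabilizing subsheaves by examining $\deg(V_1|_f)$, and then transfer stability back to $L$.

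For the converse, let $\mathcal{V}=(V,\theta)$ be $L$-stable with $c_1(V)=\pi^*(\mathbf{d})$ and $c_2(V)=0$. Since $\mathcal{V}$ is in particular $L$-semistable, Theorem \ref{thm3.5} provides a semistable Higgs bundle $\mathcal{W}=(W,\psi)$ on $C$ with $\pi^*\mathcal{W}\cong\mathcal{V}$, so it only remains to promote semistability of $\mathcal{W}$ to stability. If $\mathcal{W}$ were strictly semistable, a $\psi$-invariant subbundle $W_1$ with $0<\mathrm{rk}(W_1)<\mathrm{rk}(W)$ and $\mu(W_1)=\mu(W)$ would pull back to a $\theta$-invariant subbundle $\pi^*W_1\subsetneq V$ with $\mu_L(\pi^*W_1)=\mu(W_1)(L\cdot f)=\mu(W)(L\cdot f)=\mu_L(V)$, contradicting the $L$-stability of $\mathcal{V}$; hence $\mathcal{W}$ is stable and $\pi^*\mathcal{W}\cong\mathcal{V}$.

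The descent direction is essentially free once Theorem \ref{thm3.5} is available, so I expect the main obstacle to be the first implication: showing that a putative destabilizing $\theta$-invariant subsheaf of $\pi^*\mathcal{W}$ is itself a pullback. That is the step where one must combine the vanishing $\triangle(\pi^*W)=0$ with the Hodge index theorem to pin down the first Chern class of the subsheaf, and then invoke the Bogomolov inequality for Higgs sheaves together with Lemma \ref{lemma3.1} to force its second Chern class to vanish.
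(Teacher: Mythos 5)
Your proposal is correct, and the converse direction is word-for-word the paper's argument (Theorem \ref{thm3.5} plus pulling back a destabilizing $\psi$-invariant subbundle). For the forward direction you reach the same contradiction as the paper --- an equal-slope $d\pi(\psi)$-invariant subsheaf descends via Lemma \ref{lemma3.1} to a $\psi$-invariant subbundle $W_1\subset W$ with $\mu(W_1)=\mu(W)$ --- but by a genuinely different mechanism. The paper restricts the destabilizing sequence to a generic fiber and splits into cases: if $\deg(V_2\vert_f)=0$ it runs the Chern-class computation and Lemma \ref{lemma3.1} to get the contradiction; if $\deg(V_2\vert_f)>0$ it replaces $L$ by $L+if$ for $i\gg 0$ to get $L_i$-stability and then invokes Remark \ref{remark2} to transfer stability back to $L$. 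You instead import the machinery of Proposition \ref{prop3.2}: semistability of the sub- and quotient Higgs sheaves, Langer's Bogomolov inequality, and the discriminant identity force $\xi^2\geq 0$, while $\xi\cdot L=0$ and the Hodge index theorem force $\xi^2\leq 0$; the strong form of Hodge index (negative definiteness on $L^{\perp}$) then gives $\xi\equiv 0$, which simultaneously yields $\deg(V_1\vert_f)=0$, $c_1(V_1)^2=0$ and $\triangle(V_1)=0$, hence $c_2(V_1)=0$ and $V_1\cong\pi^*(W_1)$. This collapses the paper's case analysis and removes the polarization-change step entirely, at the cost of invoking the Higgs--Bogomolov inequality for the sub and quotient pieces (which the paper only uses in Proposition \ref{prop3.2} and for $c_2(V)\geq 0$). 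Both routes are valid; yours is arguably more uniform, the paper's more elementary on the fiber. The one point you should state explicitly is that you are using the equality case of the Hodge index theorem to pass from $\xi^2=0$ to $\xi\equiv 0$ in $\mathrm{Num}(X)_{\mathbb{R}}$, since $\xi^2=0$ alone would give $\triangle(V_1)=0$ but not $\xi\cdot f=0$.
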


As a corollary to Theorem \ref{thm4.1}, we have the following result which generalizes the results of Takemoto and Marian Aprodu for rank 2 ordinary vector bundles. ( See in \cite{12} Proposition 3.4, Proposition 3.6 and in  \cite{1} corollary 3 )

\begin{corl}\label{cor4.3}
 \rm Let $L$ be a fixed polarization on a ruled surface $\pi: X\longrightarrow C $. Then, for any stable bundle  $W$ of rank $r$ on $C$, the pullback bundle $\pi^*(W)$ is slope $L$-stable on $X$ . Conversely, if $V$ is a slope $L$-stable vector bundle 
 of rank $r$ on $X$ with $c_1(V)=\pi^*(\bf d \it)$ for some divisor \bf d \rm on $C$ and $c_2(V)= 0$, then $V \cong \pi^*(W)$ for some slope stable vector bundle $W$ on $C$.
\end{corl}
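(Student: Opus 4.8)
The plan is to obtain this as the special case of Theorem \ref{thm4.1} in which all Higgs fields vanish, using Remark \ref{remark1} to pass between Higgs stability with zero Higgs field and ordinary slope stability. Recall from that remark that a slope (semi)stable bundle is Higgs (semi)stable with respect to \emph{any} Higgs field, and, conversely, that when the Higgs field is zero every coherent subsheaf is trivially $\theta$-invariant, so Higgs stability of a pair $(F,0)$ is literally the same condition as slope stability of $F$. (Equivalently, one could simply re-run the proof of Theorem \ref{thm4.1} verbatim with $\psi=\theta=0$, where the requirement of $\theta$-invariance becomes vacuous; but reducing to the already-proved theorem is cleaner.)

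For the first assertion I would argue as follows. If $W$ is a slope stable bundle of rank $r$ on $C$, then $(W,0)$ is a stable Higgs bundle on $C$, so Theorem \ref{thm4.1} gives that $\pi^*(W,0)=(\pi^*(W),0)$ is an $L$-stable Higgs bundle on $X$. Since its Higgs field vanishes, every coherent subsheaf of $\pi^*(W)$ is $0$-invariant, and hence $L$-stability of $(\pi^*(W),0)$ as a Higgs bundle is exactly the statement that $\pi^*(W)$ is slope $L$-stable.

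For the converse I would start from a slope $L$-stable bundle $V$ on $X$ with $c_1(V)=\pi^*(\mathbf d)$ and $c_2(V)=0$; then $(V,0)$ is $L$-stable as a Higgs bundle (Remark \ref{remark1}), so by Theorem \ref{thm4.1} there is a stable Higgs bundle $\mathcal W=(W,\psi)$ on $C$ with $(V,0)\cong\pi^*(\mathcal W)=(\pi^*(W),d\pi(\psi))$. In particular $V\cong\pi^*(W)$, and transporting the Higgs field through this isomorphism yields $d\pi(\psi)=0$. The one point that needs a genuine (if short) argument beyond the formal reduction — and hence the main obstacle — is to deduce $\psi=0$ from $d\pi(\psi)=0$. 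Here I would invoke Proposition \ref{prop3.4}: the exact sequence (\ref{seq3.4.1}) exhibits $\pi^*(\Omega^1_C)$ as a subbundle of $\Omega^1_X$, so $\pi^*(W)\otimes\pi^*(\Omega^1_C)\hookrightarrow\pi^*(W)\otimes\Omega^1_X$ is injective, and the composite defining $d\pi(\psi)$ can vanish only if $\pi^*(\psi)=0$; applying $\pi_*$ (projection formula, together with $\pi_*\mathcal O_X=\mathcal O_C$) then forces $\psi=0$. Consequently $(W,0)$ is a stable Higgs bundle on $C$, i.e. $W$ is slope stable, and $V\cong\pi^*(W)$, as required.
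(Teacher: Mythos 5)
Your proposal is correct and follows exactly the route the paper intends: the corollary is stated without proof as the specialization of Theorem \ref{thm4.1} to zero Higgs fields, with Remark \ref{remark1} translating between Higgs stability for the zero field and ordinary slope stability. Your extra step in the converse direction --- checking that the Higgs field $\psi$ produced by Theorem \ref{thm4.1} must vanish, via the injectivity of $\pi^*(\Omega^1_C)\hookrightarrow\Omega^1_X$ --- is a genuine point the paper leaves implicit (a stable Higgs bundle need not have slope-stable underlying bundle, as the paper's own example shows), and your treatment of it is sound.
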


 \subsection{An Example}
\rm Consider the following example. Let $C$ be a smooth complex projective curve of genus $g(C) \geq 2$. Let $E = K^{\frac{1}{2}}_C \oplus K^{-\frac{1}{2}}_C$ , where $K^{\frac{1}{2}}_C$ is a square root of the canonical bundle $K_C$. 
Note,  $K^{2}_C \cong$ Hom$(K^{-\frac{1}{2}}_C , K^{\frac{1}{2}}_C \otimes K_C)$. Then, we obtain a Higgs field $\psi$ on $E$ by setting,
\begin{equation*}
\psi = \begin{pmatrix} 0 & \omega \\ 1 & 0 \end{pmatrix}
\end{equation*}
where $\omega\neq0\in$ Hom $(C,K^{2}_C)$ and 1 is the identity section of trivial bundle Hom$(K^{\frac{1}{2}}_C,K^{-\frac{1}{2}}_C \otimes K_C)$. Now, $(E,\psi)$ is stable Higgs bundle since $K^{\frac{1}{2}}_C$ is not $\psi$-invariant and 
there is no subbundle of positive degree which is preserved by $\psi$. However, $E$ is not slope semistable. Let $\pi:X\longrightarrow C$ be a  ruled surface. In such cases, by Theorem \ref{thm4.1},
 the pullback of these non-trivial stable Higgs bundle on $C$ will prove the existence of  non-trivial stable Higgs bundle on the ruled surface $X$ whose underlying vector bundles are not slope stable.
 
\section{Isomorphism of Moduli spaces}
Let \bf d \rm be a degree d divisor on a curve $C$ and $\pi: X \longrightarrow C$ be a ruled surface on $C$ with a fixed polarization $L$ on $X$. Recall that we denote the moduli space  of S-equivalence classes of Higgs $L$-semistable 
bundles $\mathcal{E}=(V,\theta)$ of rank $r$ on $X$, having $c_1(V)=\pi^*(\bf d\rm)$ and $c_2(V)=0$, by $\mathcal{M}_X^{Higgs}(r,\pi^*(\bf d \rm),0)$. We also denote the moduli space of S-equivalence classes of semistable Higgs bundles of rank $r$ and 
degree $d$ on $C$ by $\mathcal{M}^{Higgs}_C(r,d)$.

We have the following theorem which come up as a corollary to the theorems proved in section(3) and section(4) in this paper.

\begin{thm}\label{thm6}
\rm The moduli spaces $\mathcal{M}_X^{Higgs}(r,\pi^*(\bf d \it),0)$ and $\mathcal{M}^{Higgs}_C(r,d)$ are isomorphic as algebraic varieties.
 \begin{proof}
  Let $M_X^{Higgs}(r,\pi^*(\bf d\rm),0)$ and $M^{Higgs}_C(r,d)$ denote the moduli functors whose corresponding coarse moduli spaces are $\mathcal{M}_X^{Higgs}(r,\pi^*(\bf d \rm),0)$ and $\mathcal{M}^{Higgs}_C(r,d)$ respectively.
  For a given finite-type scheme $T$ over $k$, $M_X^{Higgs}(r,\pi^*(\bf d\rm),0)(T)$ is the set of equivalence classes of flat families of $L$-semistable Higgs Bundles on $X$ of rank $r$ with $c_1(V)=\pi^*(\bf d\rm)$ and $c_2(V)=0$ 
  parametrized by $T$. A family parametrized by $T$ corresponding to $M_X^{Higgs}(r,\pi^*(\bf d\rm),0)$ is a pair ($\mathcal{F},\psi$) where $\mathcal{F}$ is a coherent sheaf on $X\times T$, flat over $T$ and 
  $\psi \in Hom(\mathcal{F},\mathcal{F}\otimes_{\mathcal{O}_{X\times T}}p_1^*(\Omega^1_X))$, where $p_1$ denotes the projection map from $X\times T$ to $X$. Further, for every closed point $t \in T$, we have for the natural
  embedding $ t : X \hookrightarrow X\times T$, the pair $(F_t,\psi_t):=(t^*\mathcal(F),t^*(\psi))$ is a $L$-semistable Higgs bundle of rank $r$ with $c_1(F_t)=\pi^*(\bf d\rm)$ and $c_2(F_t)=0$. Let $\pi_T:=\pi\otimes id_T:X\times T \longrightarrow C\times T$. 
  Then, from Theorem \ref{thm3.5}, we get a flat family ($\mathcal{G},\phi) := ((\pi_T)_*(\mathcal{F}),(\pi_T)_*(\psi))$ parametrized by $T$ corresponding to $M^{Higgs}_C(r,d)$ such that $(\mathcal{F}_t,\psi_t) \cong (\pi^*(\mathcal{G}_t),d\pi(\phi_t))$ 
  with deg($\mathcal{G}_t)= d$ and ($\mathcal{G}_t,\phi_t)$ is a semistable Higgs bundle for every closed point $t\in T$. [Here for every closed point $t\in T$ and for the natural embedding $\tilde{t} :  C \longrightarrow C\times T$, 
  we define $ (\mathcal{G}_t,\phi_t) := (\tilde{t}^*(\mathcal{G}),\tilde{t}^*(\phi))] $. So, we get a natural transformation of functors 
\begin{align*}
 \pi_*: M_X^{Higgs}(r,\pi^*(\bf d\rm),0) \longrightarrow \it M^{Higgs}_C(r,d)
\end{align*}
Similarly, starting from a flat family ($\mathcal{G},\phi$) of semistable Higgs bundles parametrized by $T$ with deg($\mathcal{G}_t) = d$ and rank($\mathcal{G}_t) = r$ 
for every closed point $t$ of $T$, by using Theorem \ref{thm3.3}, we can get a flat family ($\mathcal{F},\psi$) of $L$-semistable Higgs bundles on $X$ parametrized by $T$ such that for 
every closed point $t$ in $T$, $c_1(\mathcal{F}_t)=\pi^*(\bf d )$, $c_2(\mathcal{F}_t)=0$ and $(\mathcal{F}_t,\psi_t) \cong (\pi^*(\mathcal{G}_t),d\pi(\phi_t))$. So, we get a natural transformation of functors
\begin{align*}
 \pi^*: M^{Higgs}_C(r,d) \longrightarrow M_X^{Higgs}(r,\pi^*(\bf d\rm),0)
\end{align*}

By construction, $ \pi_*\circ\pi^* $ and $ \pi^*\circ\pi_* $ are identity transformations on $ \mathcal{M}^{Higgs}_C(r,d) $ and $ \mathcal{M}_X^{Higgs}(r,\pi^*(\bf d \rm),0) $ respectively. 
Hence, the corresponding coarse moduli spaces are also isomorphic as varieties. 
\end{proof}
\end{thm}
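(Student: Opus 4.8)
The plan is to exhibit the isomorphism at the level of the moduli \emph{functors}, via mutually inverse natural transformations $\pi^{*}$ and $\pi_{*}$, and then to descend to the coarse moduli spaces through their universal property. Write $M_{X}^{Higgs}(r,\pi^{*}(\mathbf{d}),0)$ and $M_{C}^{Higgs}(r,d)$ for the moduli functors, so that for a finite-type $k$-scheme $T$ an element of $M_{C}^{Higgs}(r,d)(T)$ is a pair $(\mathcal{G},\phi)$ with $\mathcal{G}$ coherent on $C\times T$, flat over $T$, and $\phi\colon\mathcal{G}\to\mathcal{G}\otimes q_{1}^{*}\Omega^{1}_{C}$, where $q_{1}\colon C\times T\to C$, restricting on each closed fiber to a semistable Higgs bundle of rank $r$ and degree $d$; similarly on the $X$-side with the constraints $c_{1}=\pi^{*}(\mathbf{d})$, $c_{2}=0$. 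Set $\pi_{T}:=\pi\times\mathrm{id}_{T}\colon X\times T\to C\times T$ and let $p_{1}\colon X\times T\to X$ be the first projection, so that $\pi\circ p_{1}=q_{1}\circ\pi_{T}$ and the resulting square is cartesian.

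The transformation $\pi^{*}\colon M_{C}^{Higgs}(r,d)\to M_{X}^{Higgs}(r,\pi^{*}(\mathbf{d}),0)$ is the easy direction: given $(\mathcal{G},\phi)$ over $T$ I take $\mathcal{F}:=\pi_{T}^{*}\mathcal{G}$, with Higgs field the composite of $\pi_{T}^{*}\phi$ with the canonical surjection $\pi_{T}^{*}q_{1}^{*}\Omega^{1}_{C}=p_{1}^{*}\pi^{*}\Omega^{1}_{C}\twoheadrightarrow p_{1}^{*}\Omega^{1}_{X}$. Flatness over $T$ is automatic since $\pi_{T}$ is flat; base change $t^{*}\pi_{T}^{*}\mathcal{G}\cong\pi^{*}(\tilde t^{*}\mathcal{G})$ is formal; the Chern-class constraints are immediate; and each closed fiber is $L$-semistable by Theorem \ref{thm3.3}. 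Since these operations commute with arbitrary base change $T'\to T$, this is a natural transformation.

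The transformation $\pi_{*}\colon M_{X}^{Higgs}(r,\pi^{*}(\mathbf{d}),0)\to M_{C}^{Higgs}(r,d)$ is where the work lies. Given $(\mathcal{F},\psi)$ over $T$ I set $\mathcal{G}:=(\pi_{T})_{*}\mathcal{F}$. By Theorem \ref{thm3.5} each fiber $\mathcal{F}_{t}$ has vanishing discriminant and is isomorphic to $\pi^{*}(\mathcal{G}_{t})$ for a semistable Higgs bundle $(\mathcal{G}_{t},\phi_{t})$ on $C$; in particular $\mathcal{F}_{t}$ is locally free, hence flat over $C$, so by the fiberwise criterion for flatness $\mathcal{F}$ is flat over $C\times T$. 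Since the restriction of $\mathcal{F}$ to a fiber of $\pi_{T}$ is $(\pi^{*}\mathcal{G}_{t})\vert_{f}\cong\mathcal{O}_{f}^{\oplus r}$, with vanishing $H^{1}$, cohomology and base change give $R^{1}(\pi_{T})_{*}\mathcal{F}=0$ and show that $(\pi_{T})_{*}\mathcal{F}$ is locally free of rank $r$ on $C\times T$ — hence flat over $T$ — with $t^{*}(\pi_{T})_{*}\mathcal{F}\cong\pi_{*}(\mathcal{F}_{t})\cong\mathcal{G}_{t}$ on each slice $C\times\{t\}$, so its fibers have degree $d$. The pushforward Higgs field $\phi:=(\pi_{T})_{*}\psi$ is defined using the relative form of Proposition \ref{prop3.4}, namely $(\pi_{T})_{*}p_{1}^{*}\Omega^{1}_{X}\cong q_{1}^{*}\Omega^{1}_{C}$ (flat base change applied to $\pi_{*}\Omega^{1}_{X}\cong\Omega^{1}_{C}$), together with the projection formula $(\pi_{T})_{*}(\mathcal{F}\otimes p_{1}^{*}\Omega^{1}_{X})\cong\mathcal{G}\otimes q_{1}^{*}\Omega^{1}_{C}$; the condition $\phi\wedge\phi=0$ is automatic since the relevant composite lands in $\mathcal{G}\otimes q_{1}^{*}\Omega^{2}_{C}=0$, and base change in the projection-formula identity shows $(\mathcal{G},\phi)$ restricts fiberwise to the Higgs bundle $(\mathcal{G}_{t},\phi_{t})$ of Theorem \ref{thm3.5}.

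Finally I would check that $\pi_{*}\circ\pi^{*}$ and $\pi^{*}\circ\pi_{*}$ are the identity. The first is the projection formula $(\pi_{T})_{*}\pi_{T}^{*}\mathcal{G}\cong\mathcal{G}\otimes(\pi_{T})_{*}\mathcal{O}_{X\times T}\cong\mathcal{G}$, compatible with the Higgs fields via the commuting square already drawn in the proof of Theorem \ref{thm3.5}, now read over $X\times T$. The second is exactly the functorial form of the isomorphism $\pi^{*}(\mathcal{W})\cong\mathcal{E}$ established in Theorem \ref{thm3.5}. Both operations are algebraic, compatible with base change, and exact on the relevant categories — pullback always, and $\pi_{*}$ here because $R^{1}(\pi_{T})_{*}$ vanishes — so they preserve Jordan--H\"older filtrations and hence $S$-equivalence classes. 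A natural isomorphism of moduli functors descends to an isomorphism of the co-representing coarse moduli spaces $\mathcal{M}_{X}^{Higgs}(r,\pi^{*}(\mathbf{d}),0)$ and $\mathcal{M}_{C}^{Higgs}(r,d)$ as algebraic varieties, which is the assertion. I expect the main obstacle to be precisely this base-change bookkeeping in the construction of $\pi_{*}$: establishing flatness of $\mathcal{F}$ over $C\times T$, the global vanishing of $R^{1}(\pi_{T})_{*}\mathcal{F}$, flatness of $(\pi_{T})_{*}\mathcal{F}$ over $T$ with the expected fibers, and the well-definedness of the pushforward Higgs field — everything else is either formal or already packaged in Theorems \ref{thm3.3} and \ref{thm3.5}.
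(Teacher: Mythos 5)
Your proposal follows essentially the same route as the paper: mutually inverse natural transformations $\pi^{*}$ and $\pi_{*}$ between the moduli functors, built fiberwise from Theorems \ref{thm3.3} and \ref{thm3.5}, descending to an isomorphism of the coarse moduli spaces. In fact you supply more of the base-change and flatness bookkeeping (flatness of $(\pi_{T})_{*}\mathcal{F}$, vanishing of $R^{1}(\pi_{T})_{*}\mathcal{F}$, the relative form of Proposition \ref{prop3.4}, preservation of $S$-equivalence) than the paper's own proof, which asserts these points without argument.
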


\begin{xrem}\label{remark3}
\rm Using Theorem \ref{thm4.1} and a similar kind of argument as in Theorem \ref{thm6}, it can be shown that the moduli space $\mathcal{M}^{stableHiggs}_C(r,d)$ of isomorphic classes of stable Higgs bundles of rank $r$ and 
degree $d$ on $C$ and the moduli space $\mathcal{M}_X^{stableHiggs}(r,\pi^*(\bf d \rm),0)$ of isomorphic classes of Higgs $L$-stable bundles $\mathcal{E}=(V,\theta)$ of rank $r$ on $X$, having $c_1(V)=\pi^*(\bf d\rm)$ for some divisor \bf d \rm of degree 
$d$ on $C$ and $c_2(V)=0$, are isomorphic as algebraic varieties.

\end{xrem}

\subsection*{Acknowledgement}
I would like to thank my advisor Prof. D.S. Nagaraj, IMSc Chennai for his constant guidance at every stage of this work. 
I would also like to thank Dr. Rohith Varma, IMSc Chennai for many useful discussions. This work is supported financially by a fellowship from IMSc,Chennai (HBNI), DAE, Government of India.

\end{document}